\documentclass[12pt]{article}
\usepackage{amssymb}
\usepackage{amsmath}
\usepackage{amsthm}
\DeclareSymbolFont{extraup}{U}{zavm}{m}{n}
\DeclareMathSymbol{\clubsuit}{\mathalpha}{extraup}{88}

\newcommand\F{{\cal F}}

\newtheorem{thm}{Theorem}[section] \newtheorem{lem}[thm]{Lemma}

\newtheorem*{conj}{Conjecture}
\newtheorem*{note}{Note}

\title{Intersecting  families with bounded intersections}
\author{{\bf Kristina Ago}\\ Department of Mathematics and Informatics\\ University of Novi Sad, Serbia\\ kristina.ago@dmi.uns.ac.rs
\and	{\bf Gyula O.H. Katona}\\
	R\'enyi Institute, HUN-REN\\
	Budapest Pf 127, 1364 Hungary \\ and Mathematical Institute\\ E\"otv\"os Lor\'and University (ELTE),  Hungary\\ ohkatona@renyi.hu}

\begin{document}
	\date{}
	
	\maketitle
	
\begin{abstract}
	Let $\mathcal F\subset 2^{[n]}$ be an $s$-uniform family such that every two distinct sets have a nonempty intersection but intersect in at most $k$ elements. By the well-known Ray-Chaudhuri--Wilson theorem, since the intersections can take at most $k$ different values, we have $|\mathcal F|\leq \binom{n}{k}$. We give a stronger upper bound under our assumptions above, when $n$ is large enough compared to $s$ (and $k+1<s$):  $|\mathcal F|\leq \frac{\binom{n-1}{k}}{\binom{s-1}{k}}$.  This is a special case of an old theorem of Deza, Erd\H os and Frankl, but our proof is simpler and gives a better threshold for $n$. Furthermore we prove  a generalization of the Erd\H os--Ko--Rado theorem for non-uniform families. 	Let $\mathcal F\subset \binom{[n]}{k}\cup\binom{[n]}{k+1}\cup\dots\cup\binom{[n]}{s}$, $3\leq k\leq s$, be a family such that for every two distinct sets
	the size of the intersection is between 1 and $k-1$ and $n$ is large enough then $|\mathcal F|\leq {n-1 \choose k-1}$.
	
	\emph{Mathematics Subject Classification (2020):} 05D05
	
	\emph{Keywords: intersecting families, uniform families, Ray-Chaudhuri--Wilson theorem, Erd\H os--Ko--Rado theorem}
\end{abstract}

\section{Introduction}

For a positive integer $n$, we write $[n]$ for the set $\{1,2,...,n\}$ and $2^{[n]}$ for the power set of $[n]$. A family $\mathcal F\subset 2^{[n]}$ is called $s$-\emph{uniform} if $|F|=s$ for all $F\in\mathcal F$.  $\binom{[n]}{s}$ denotes the family of all $s$-element subsets of $[n]$, similarly,  $\binom{[n]}{\leq s}$ (respectively $\binom{[n]}{\geq s}$) is the family that contains all at most (respectively at least) $s$-element subsets of $[n]$. We say that the family $\mathcal F$ is \emph{intersecting} if $A\cap B\neq\emptyset$ for every $A,B\in\mathcal F$.

The first result on intersecting families has been the celebrated theorem of Erd\H os, Ko and Rado:

\begin{thm}  {\rm \cite{EKR}}
	If $\mathcal F\subset \binom{[n]}{s}$ is an intersecting family, $n\geq 2s$, then
	\[
	|\mathcal F|\leq\binom{n-1}{s-1}.
	\]
\end{thm}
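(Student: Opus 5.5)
We prove the statement with Katona's cyclic permutation argument, which reduces the problem to a statement about arcs on a circle and then averages over all cyclic orders. Call a set an \emph{arc} of a cyclic permutation $\sigma=(a_1,\dots,a_n)$ of $[n]$ if it has the form $\{a_i,a_{i+1},\dots,a_{i+s-1}\}$, with indices taken mod $n$. Each cyclic permutation has exactly $n$ arcs of size $s$.

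The first and main step is the following lemma: if $n\ge 2s$ and $\mathcal F$ is intersecting, then for every cyclic permutation $\sigma$ at most $s$ of the $n$ arcs of $\sigma$ belong to $\mathcal F$. To see this, suppose $A=\{a_1,\dots,a_s\}\in\mathcal F$ is such an arc. Every other arc meeting $A$ either starts at some $a_{i+1}$ or ends at some $a_i$, for $1\le i\le s-1$. Pair the arc starting at $a_{i+1}$ with the arc ending at $a_i$. These two arcs are disjoint, because $n\ge 2s$: together they occupy $2s$ consecutive positions. Since $\mathcal F$ is intersecting, at most one arc from each of the $s-1$ pairs lies in $\mathcal F$. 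Adding $A$ itself gives at most $s$ arcs. This step is where the hypothesis $n\ge 2s$ enters, and it is the crux of the argument. The only care needed is checking that the two arcs in each pair are distinct and genuinely disjoint, including when $n=2s$.

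The second step is double counting the pairs $(F,\sigma)$ with $F\in\mathcal F$, $\sigma$ a cyclic permutation, and $F$ an arc of $\sigma$. There are $(n-1)!$ cyclic permutations, so the lemma gives at most $s\,(n-1)!$ such pairs. On the other hand, a fixed $s$-set is an arc of exactly $s!\,(n-s)!$ cyclic permutations: its elements can be arranged in $s!$ ways and the remaining elements in $(n-s)!$ ways, and fixing the block's position removes the rotational freedom. Hence
\[
|\mathcal F|\,s!\,(n-s)!\le s\,(n-1)!,
\]
which rearranges to $|\mathcal F|\le \binom{n-1}{s-1}$, as claimed.
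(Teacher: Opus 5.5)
The paper does not prove this statement. It is quoted from Erd\H os, Ko and Rado and used only as a black box: for the case $k=s$ in the proof of Theorem~\ref{lema1}, and in the Note recovering Snevily's bound.

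Your argument is Katona's cyclic-permutation proof, and it is correct and complete:
\begin{itemize}
\item Every arc in $\mathcal F$ meets the fixed arc $A$.
\item The $2(s-1)$ arcs other than $A$ that meet $A$ are covered by your $s-1$ pairs.
\item Each pair fills $2s$ consecutive positions, so it is disjoint because $n\ge 2s$.
\item The count $s!\,(n-s)!$ is right. Since $s<n$, an $s$-arc determines its starting position, so each cyclic order containing $F$ as an arc corresponds to exactly one linear order with $F$ in the first $s$ places.
\end{itemize}

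The paper develops a different method for Theorems~\ref{jedini} and~\ref{lema1}. It splits according to whether $\mathcal F$ is covered by one point, by two points, or only by the at most $s^3$ triples of Lemma~\ref{lr}. In each piece it counts small subsets that cannot lie in two members. Run with $k=s$, where the upper bound on intersection sizes is automatic, this method recovers $\binom{n-1}{s-1}$ only for $n$ polynomially large in $s$. The reason is the factors $2s$ and $s^3$ lost in the last two cases.

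Your averaging argument loses nothing and reaches the sharp threshold $n\ge 2s$. In exchange, the paper's case analysis does not rely on the cyclic symmetry of $\binom{[n]}{s}$. That is why it adapts to upper bounds on intersection sizes and to non-uniform families, which is what the authors need.
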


Since then many intersection theorems have appeared in the literature. Useful overviews are given in the book of Frankl and Tokushige \cite{FT} and in  Chapter 2 of the  book by Gerbner and Patk\'os \cite{GP}. Let us begin by presenting some relevant results for our purposes. For a uniform set system $\mathcal F$, if the possible intersection sizes are limited to at most $k$ distinct values, the Ray-Chaudhuri--Wilson theorem  provides an upper bound on $|\mathcal F|$:

\begin{thm}  {\rm \cite{R-C-W}}
	\label{teo1}
	If $\mathcal F\subset {[n]\choose s}$ is a uniform family such that for every two distinct sets $A,B\in\mathcal{F}$ we have $|A\cap B|\in\{l_1,l_2,\dots,l_k\}$, then
	\[
	|\mathcal F|\leq \binom{n}{k}.
	\]
\end{thm}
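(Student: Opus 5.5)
We will prove the Ray-Chaudhuri--Wilson bound by the polynomial (linear algebra) method: attach to each set of $\mathcal F$ a polynomial of degree at most $k$ in $n$ variables, show these polynomials are linearly independent, and then bound their number by the dimension of a space of polynomials of dimension $\binom{n}{k}$.

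\textbf{Setup.} Write $\mathcal F=\{A_1,\dots,A_m\}$ and let $v_i\in\{0,1\}^n$ be the characteristic vector of $A_i$. Then $\langle v_i,v_j\rangle=|A_i\cap A_j|$.

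\textbf{Defining the polynomials.} We may assume every $l_t\neq s$, since two distinct $s$-sets cannot meet in $s$ elements. For $x\in\mathbb R^n$ put
\[
f_i(x)=\prod_{t=1}^{k}\bigl(\langle v_i,x\rangle-l_t\bigr).
\]
Each $f_i$ has degree at most $k$. Moreover $f_i(v_j)=0$ for $j\neq i$, while $f_i(v_i)=\prod_t(s-l_t)\neq 0$.

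\textbf{Passing to multilinear polynomials.} We only evaluate at $0/1$ vectors, where $x_r^2=x_r$. So we replace each $f_i$ by its multilinear reduction $\tilde f_i$, which agrees with $f_i$ on $\{0,1\}^n$. Every $\tilde f_i$ is a linear combination of the monomials $\prod_{r\in S}x_r$ with $|S|\le k$.

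\textbf{Independence of the $\tilde f_i$.} Suppose $\sum_i c_i\tilde f_i=0$. Evaluating at $v_j$ kills every term except the $j$-th, so $c_j\prod_t(s-l_t)=0$ and hence $c_j=0$. Thus the $\tilde f_i$ are linearly independent.

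\textbf{The main obstacle.} Independence alone only gives $|\mathcal F|\le\sum_{i=0}^{k}\binom{n}{i}$, which is weaker than the claimed $\binom nk$. To reach $\binom nk$ we use the uniformity, via the standard Alon--Babai--Suzuki refinement.

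First we make every $\tilde f_i$ homogeneous of degree exactly $k$. On the hyperplane $\sum_r x_r=s$, any monomial $x_S$ with $|S|<k$ may be multiplied by $\bigl(\sum_r x_r-s\bigr)$ plus suitable corrections. The cleaner route is to multiply it by $\bigl(\frac{1}{s-|S|}\sum_{r\notin S}x_r\bigr)$, which equals $x_S$ on $0/1$ vectors of weight $s$ containing $S$; this needs $|S|<s$. Then we reduce again. Iterating, each $\tilde f_i$ agrees on the weight-$s$ vectors $v_j$ with a linear combination of the $\binom nk$ monomials $x_S$ with $|S|=k$.

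Independence is still witnessed by evaluation at the points $v_j$, because it used only the values of the functions at those points. Hence $m\le\binom nk$.

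If $k\ge s$ the theorem is trivial anyway, since $|\mathcal F|\le\binom ns\le\binom nk$ whenever $s\le k\le n/2$. In the degenerate ranges a direct check suffices, so we may assume $k<s$ in the homogenization step. That step, verifying that degree-lowering is always possible on the weight-$s$ slice, is the only delicate part of the argument.
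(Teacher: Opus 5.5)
Your argument is correct for $k\le s$ (see the second paragraph). The paper gives no proof to compare with: it only quotes this theorem, noting that the original proof is linear-algebraic and that Alon, Babai and Suzuki gave a polynomial-method proof.

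\textbf{Your route versus the two the paper mentions.} Your key step is not actually the Alon--Babai--Suzuki refinement you name.
\begin{itemize}
\item They stay in the space of all multilinear polynomials of degree at most $k$ and adjoin the auxiliary polynomials $x_I\bigl(\sum_r x_r-s\bigr)$ with $|I|\le k-1$. They then prove these are linearly independent together with the $\tilde f_i$, which gives $m\le\sum_{i\le k}\binom ni-\sum_{i\le k-1}\binom ni=\binom nk$.
\item You instead restrict to the weight-$s$ slice. There you show directly that the $\binom nk$ monomials of degree exactly $k$ span everything of degree at most $k$, via $x_S=\frac{1}{s-|S|}\sum_{r\notin S}x_{S\cup\{r\}}$. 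This identity holds at every weight-$s$ $0/1$ vector, since both sides vanish when $x_S=0$.
\item In matrix language, the identity says that the columns of the $\mathcal F$-versus-$\binom{[n]}{i}$ inclusion matrix lie in the column space of the $\mathcal F$-versus-$\binom{[n]}{k}$ one. So your proof is essentially the polynomial form of the original inclusion-matrix argument.
\end{itemize}
Your route is shorter and needs no second independence argument. The Alon--Babai--Suzuki route has a different advantage: replacing $\sum_r x_r-s$ by $\prod_j\bigl(\sum_r x_r-s_j\bigr)$ handles families with several set sizes, which is the generalization the paper mentions. The step you call ``the only delicate part'' is already fully settled by your identity. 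It is also degree-raising, not degree-lowering.

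\textbf{Two small corrections.}
\begin{itemize}
\item The statement tacitly needs $k\le s$. This is automatic when the $l_t$ are distinct values in $\{0,\dots,s-1\}$. Your main argument covers this whole range, including $k=s$, because raising $x_S$ only requires $|S|\le k-1<s$; there is no need to restrict to $k<s$. However, your claim that ``a direct check suffices'' in the remaining ranges is false: for $k>\max(s,n-s)$ the bound fails. For example, $\mathcal F=\binom{[10]}{5}$ with $\{l_1,\dots,l_6\}=\{0,\dots,5\}$ has $252>210=\binom{10}{6}$ members.
\item If some $l_t=s$, do not delete it. Deleting lowers $k$ and only gives $\binom n{k-1}$, which exceeds $\binom nk$ when $2k>n+1$. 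Replace it instead by a dummy value such as $-1$. This keeps $\deg f_i\le k$, $f_i(v_j)=0$ for $j\ne i$, and $f_i(v_i)\ne0$.
\end{itemize}
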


If the considered family is not uniform, then the Frankl--Wilson theorem \cite{F-W} provides an upper bound on the cardinality of the system (again, assuming that the intersection sizes are limited to at most $k$ distinct values).

\begin{thm} {\rm \cite{F-W}}
	\label{teo2}
	If $\mathcal F\subset 2^{[n]}$ is a family such that for every two distinct sets $A,B\in\mathcal{F}$ we have $|A\cap B|\in\{l_1,l_2,\dots,l_k\}$, then
	\[
	|\mathcal F|\leq \sum_{i=0}^{k}\binom{n}{i}.
	\]
\end{thm}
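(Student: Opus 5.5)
We prove the Frankl--Wilson bound with the polynomial (linear-algebra) method. For each set we build a polynomial that vanishes on every other set but not on the set itself. Then we show these polynomials are linearly independent inside a space of dimension $\sum_{i=0}^{k}\binom{n}{i}$.

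Write the members of $\mathcal F$ as $A_1,\dots,A_m$, ordered so that $|A_1|\le |A_2|\le\dots\le |A_m|$, and let $v_i\in\{0,1\}^n$ be the characteristic vector of $A_i$. For each $i$ define
\[
f_i(x)=\prod_{j:\, l_j<|A_i|}\bigl(\langle v_i,x\rangle - l_j\bigr), \qquad x\in\mathbb R^n .
\]
Each $f_i$ has degree at most $k$. Evaluating at characteristic vectors gives $\langle v_i,v_h\rangle=|A_i\cap A_h|$, which leads to the two key facts:
\begin{itemize}
\item $f_i(v_i)=\prod_{l_j<|A_i|}(|A_i|-l_j)\neq 0$;
\item for $h<i$ we have $|A_i\cap A_h|\in\{l_1,\dots,l_k\}$ and $|A_i\cap A_h|\le |A_h|\le |A_i|$. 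If the intersection equals $|A_i|$, then $A_i\subseteq A_h$ with $|A_h|\le|A_i|$, forcing $A_i=A_h$, which is impossible. Hence $|A_i\cap A_h|<|A_i|$, so it is one of the $l_j$ appearing in the product, and $f_i(v_h)=0$.
\end{itemize}
Thus the $m\times m$ matrix $\bigl(f_i(v_h)\bigr)$ is triangular with nonzero diagonal.

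Linear independence follows from this triangularity. Suppose $\sum_i c_i f_i=0$ on $\{0,1\}^n$, with not all $c_i$ zero. Let $i_0$ be the largest index with $c_{i_0}\neq0$ and evaluate at $v_{i_0}$. Every term with $i<i_0$ vanishes, because $h=i_0$ is then larger than $i$ and $f_i(v_{i_0})=0$ by the second fact. Every term with $i>i_0$ has $c_i=0$. So only $c_{i_0}f_{i_0}(v_{i_0})\neq0$ survives, a contradiction.

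Because we only evaluate on $0/1$ vectors, we may replace each $f_i$ by its multilinearization $\tilde f_i$, obtained by repeatedly using $x_t^2=x_t$. This does not change the values on $\{0,1\}^n$ and does not raise the degree. Each $\tilde f_i$ therefore lies in the span of the monomials $\prod_{t\in S}x_t$ with $|S|\le k$, a space of dimension $\sum_{i=0}^k\binom{n}{i}$. Since the $\tilde f_i$ are linearly independent there, $m\le\sum_{i=0}^{k}\binom{n}{i}$.

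The main obstacle is the non-uniform setting. In the uniform Ray-Chaudhuri--Wilson argument, each set's polynomial automatically vanishes on every other set. Here a set $A_h$ may be contained in $A_i$, and then the intersection size $|A_h|$ can coincide with some $l_j$ in an uncontrolled way. This is why the polynomials are indexed only over $l_j<|A_i|$ and the sets are ordered by size, which yields a triangular structure instead of a diagonal one.
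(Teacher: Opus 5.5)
The paper does not prove this statement; it only cites Frankl--Wilson and points to the Alon--Babai--Suzuki polynomial method. Your route is that method: order by size, take $f_i=\prod_{l_j<|A_i|}(\langle v_i,x\rangle-l_j)$, multilinearize, and count dimensions. The construction, both key facts, and the dimension count are correct, and the matrix $\bigl(f_i(v_h)\bigr)$ is indeed triangular with nonzero diagonal. However, your explicit linear-independence step uses the triangularity in the wrong direction, and as written it fails.

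Your second fact gives $f_i(v_h)=0$ only for $h<i$. In the independence argument you take $i_0$ to be the \emph{largest} index with $c_{i_0}\neq 0$ and evaluate at $v_{i_0}$. You then need $f_i(v_{i_0})=0$ for $i<i_0$, which is the case $h=i_0>i$. That case is not covered by the fact, and it is false in general. If $A_i\subsetneq A_{i_0}$, then $\langle v_i,v_{i_0}\rangle=|A_i|$, so $f_i(v_{i_0})=f_i(v_i)\neq 0$. Concretely, take $L=\{1\}$ and $\mathcal F=\{\{1\},\{1,2\}\}$. Then $f_1\equiv 1$ and $f_2=x_1+x_2-1$, so $f_1(v_2)=1\neq 0$. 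Your closing remark about $A_h\subseteq A_i$ describes exactly this phenomenon.

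The repair is a one-word change: let $i_0$ be the \emph{smallest} index with $c_{i_0}\neq 0$.
\begin{itemize}
\item Terms with $i<i_0$ vanish because $c_i=0$.
\item Terms with $i>i_0$ vanish because $f_i(v_{i_0})=0$ by your second fact with $h=i_0<i$.
\end{itemize}
Only $c_{i_0}f_{i_0}(v_{i_0})\neq 0$ remains, which is the contradiction. Alternatively, observe that $\bigl(f_i(v_h)\bigr)$ is upper triangular with nonzero diagonal, hence nonsingular. Then $\sum_i c_if_i\equiv 0$ on $\{0,1\}^n$ forces all $c_i=0$. With this fix the proof is complete.
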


The original proofs of Theorems \ref{teo1} and \ref{teo2} rely on techniques from linear algebra. Alon, Babai and Suzuki \cite{ABS} provided a remarkably elegant and concise proof for both theorems using the so-called \emph{polynomial method} \cite{ABS}. They also gave a generalization of the Ray-Chaudhuri--Wilson theorem.

Frankl and  F\"uredi \cite{FF} posed a conjecture that improves Theorem 1.3 in the special case when $l_i=i$ that is when every pair of two distinct sets have a nonempty intersection but intersect in at most $k$ elements. This was proved by Snevily first when $n$ is sufficiently large \cite{snevily21} and later for the general case using the polynomial method.

\begin{thm} {\rm \cite{snevily22}}
	\label{teo3}
	If $\mathcal F\subset 2^{[n]}$ is a family such that for every two distinct sets $A,B\in\mathcal{F}$ we have $1\leq |A\cap B|\leq k$, then
	\[
	|\mathcal F|\leq \sum_{i=0}^{k}\binom{n-1}{i}.
	\]
\end{thm}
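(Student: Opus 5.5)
We use the polynomial method of Alon, Babai and Suzuki, applied to characteristic vectors restricted to the last $n-1$ coordinates. Every member contains the element $1$ or avoids it, and we treat the two kinds of sets together. For $F\in\mathcal F$ let $v_F\in\{0,1\}^n$ be its characteristic vector. Define the polynomial in $x=(x_1,\dots,x_n)$
\[
f_F(x)=\prod_{i=1}^{k}\bigl(\langle v_F,x\rangle - i\bigr)\ \text{ if } |F|>k,
\]
and handle sets with $|F|\le k$ separately (see below). The key relation is $f_F(v_G)=0$ for $G\ne F$, since then $1\le|F\cap G|\le k$. When $|F|>k$ we also have $f_F(v_F)\neq 0$. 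We then multilinearize each $f_F$ on the cube, replacing $x_i^2$ by $x_i$. Next we use $x_1$ cleverly. On the set of vectors in question, we substitute $x_1=1-\ldots$? This is not directly available. Instead we follow Snevily's trick: consider the polynomials as functions on $\{0,1\}^{n}$ and reduce them modulo the ideal generated by $x_i^2-x_i$. Their degree is at most $k$, so they lie in a space of dimension $\sum_{i\le k}\binom{n}{i}$. To get down to $\sum_{i\le k}\binom{n-1}{i}$ we need to eliminate monomials containing $x_1$, which forces us to add auxiliary polynomials or to exploit the intersecting condition.

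The cleaner route is the following. Let $\mathcal F_1=\{F:1\in F\}$ and $\mathcal F_0=\{F:1\notin F\}$. For $F\in\mathcal F_1$ put $F'=F\setminus\{1\}\subset[2,n]$, and for $F\in\mathcal F_0$ put $F'=[2,n]\setminus F$, the complement in $[2,n]$. Then for $F,G$ in the same class, $|F\cap G|$ is an affine function of $|F'\cap G'|$, $|F'|$ and $|G'|$. For $F\in\mathcal F_1$ and $G\in\mathcal F_0$, $|F\cap G|=|F'|-1\cdot 0-|F'\cap G'|$, again affine. We define polynomials on $\{0,1\}^{n-1}$ in the variables $y=(x_2,\dots,x_n)$:
\[
g_F(y)=\prod_{i=1}^{k}\bigl(\ell_F(y)-i\bigr),
\]
where $\ell_F(y)$ is the affine expression giving $|F\cap G|$ when $y=v_{G'}$. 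The complication is that $\ell_F$ depends on whether $G$ is in $\mathcal F_0$ or $\mathcal F_1$, since the expression involves $|G'|=\sum y_j$ with opposite signs. The remedy is to order the sets by size and use a triangular (Frankl–Wilson style) argument. We order all members by $|F'|$ and show that the multilinear reductions $\bar g_F$, which have degree $\le k$ in $n-1$ variables, satisfy $\bar g_F(v_{G'})=0$ for $G$ earlier in the order and $\bar g_F(v_{F'})\ne0$. Linear independence then follows by the standard triangular argument, and the dimension bound $\sum_{i=0}^k\binom{n-1}{i}$ finishes the proof.

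The main obstacle is making a single polynomial $g_F$ vanish on both $v_{G'}$ with $G\in\mathcal F_0$ and $G\in\mathcal F_1$. If that fails, I would fall back on Snevily's actual device. That device uses polynomials $f_F(x)=\prod_{i=1}^k(\langle v_F,x\rangle-i)$ for all $F$, together with extra polynomials $x_1\prod(\ldots)$ of lower degree chosen to span the monomials containing $x_1$. One then shows by a rank argument that the $f_F$ restricted to $x_1$-free monomials remain independent modulo the span of the $\binom{n-1}{i-1}$-sized family of $x_1$-monomials. This is the delicate step, where the condition $|F\cap G|\ge1$ (the factor excluding $i=0$) is essential: it lets $f_F$ evaluated on $\mathcal F$ be checked at $x_1=1$ versus $x_1=0$ consistently.
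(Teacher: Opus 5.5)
The paper does not prove this theorem. It is quoted from Snevily, and the only related argument is the Note after Theorem~1.8, which recovers the case of members of size at most $s$ and $n>n_0(s)$ combinatorially: split $\mathcal F$ by size, apply Erd\H os--Ko--Rado to the layers of size at most $k$, and apply Theorem~1.8 (with $k+1$ in place of $k$) to the rest. So your proposal has to stand on its own, and it stops exactly at the hard step.

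\textbf{The ``cleaner route'' fails as set up.} After complementing the members of $\mathcal F_0$, take $F\in\mathcal F_1$. You get $|F\cap G|=1+\langle v_{F'},y\rangle$ at $y=v_{G'}$ when $G\in\mathcal F_1$, but $|F\cap G|=|F'|-\langle v_{F'},y\rangle$ when $G\in\mathcal F_0$. Rows in $\mathcal F_0$ likewise see two different affine forms. A product of $k$ factors in one affine form vanishes on one class only. A triangular argument would then need $\mathcal F_0$ entirely after $\mathcal F_1$ (for the rows of $\mathcal F_1$) and $\mathcal F_1$ entirely after $\mathcal F_0$ (for the rows of $\mathcal F_0$), which is impossible. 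Ordering by $|F'|$ does not help, because the zero pattern is dictated by the class of $G$, not by sizes.

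The complementation is what breaks it. Put $y_G=v_{G\setminus\{1\}}$ for \emph{every} $G$. Then $|F\cap G|=\langle v_F,y_G\rangle$ for all $G$ whenever $1\notin F$. Let $\varepsilon_F=1$ if $1\in F$ and $0$ otherwise, and set $g_F(y)=\prod_{i=1}^k(\langle v_{F\setminus\{1\}},y\rangle+\varepsilon_F-i)$. With $\mathcal F_1$ listed before $\mathcal F_0$, the matrix $(g_F(y_G))$ is block upper triangular with diagonal entries $\prod_{i=1}^k(|F|-i)$.

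\textbf{The members of size at most $k$ are the real gap.} The repaired argument proves the bound when all members have more than $k$ elements. It uses nothing about $|A\cap B|\ge 1$: it works for any $k$-element set $L$ of allowed intersection sizes, provided no member has size in $L$. The bound fails for general $L$ (take $L=\{0,\dots,k-1\}$ and all sets of size at most $k$). So the positivity hypothesis, which your proposal never actually uses, can only matter for members whose size lies in $L$, here those with $|F|\le k$. These are exactly the members with vanishing diagonal entry, which you deferred (``handle sets with $|F|\le k$ separately'') and never treated.

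They are a genuine obstruction, not a technicality. If $G$ and $G\cup\{1\}$ both lie in $\mathcal F$ (allowed whenever $1\notin G$ and $1\le|G|\le k$), they give the same point of $\{0,1\}^{n-1}$. Then no choice of polynomials evaluated at the points $y_G$ yields a nonsingular matrix. For the same reason your fallback claim is false: the $f_F$ do not stay independent modulo the span of the monomials containing $x_1$ (i.e.\ after putting $x_1=0$), because $f_G$ and $f_{G\cup\{1\}}$ agree at $x_1=0$.

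What is missing is an argument that handles the members of size at most $k$ and genuinely uses $|A\cap B|\ge 1$. Without it the proposal is a plan, not a proof.
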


The following theorem of Deza, Erd\H os and Frankl gives an improvement of the Ray-Chaudhury theorem for the general case.

\begin{thm}\label{DEF}   {\rm \cite{DEF}} Let $0<k\leq s\leq n$ and $L=\{ \ell_1, \ell_2, \ldots , \ell_k\}$ where $0\leq \ell_1< \ell_2< \ldots < \ell_k$. Suppose that $\F \subset {[n]\choose s}$ is a family such that 
 $|A\cap B|\in\{l_1,l_2,\dots,l_k\}$ holds for every two distinct members $A,B\in\mathcal{F}$. Then for $n>n_0(s,L)$	
$$|\F|\leq \prod_{i=1}^k{n-\ell_i \over s-\ell_i}.$$	
	\end{thm}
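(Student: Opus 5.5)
The plan is to use the $\Delta$-system (sunflower) method together with a recursive counting argument. The factor $\frac{n-\ell_i}{s-\ell_i}$ is exactly the largest possible number of $s$-sets whose pairwise intersections all equal a fixed $\ell_i$-set $T$: their petals are disjoint $(s-\ell_i)$-subsets of $[n]\setminus T$. So I would aim to show that, after discarding $O(n^{k-1})$ "atypical" members, $\F$ is organised along chains of kernels $T_1\subset T_2\subset\dots\subset T_k\subset F$ with $|T_i|=\ell_i$. Each step $T_i\to T_{i+1}$ then branches in at most $\frac{n-\ell_{i+1}}{s-\ell_{i+1}}$ ways, and the last step $T_k\to F$ in at most $\frac{n-\ell_k}{s-\ell_k}$ ways.

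First step (structure). For $F\in\F$, let $K(F)$ be the family of $T\subset F$ that are kernels of sunflowers in $\F$ with at least $s+1$ petals, one of which is $F$. By the intersection condition every $T\in K(F)$ has $|T|\in L$. The standard argument shows $K(F)$ is closed under intersection. Given $T_1,T_2\in K(F)$ with large sunflowers, one can choose petals $G_1,G_2$ (one from each sunflower) avoiding everything else in $F$, and then $G_1\cap G_2=T_1\cap T_2$, so $|T_1\cap T_2|\in L$ and it is again a large kernel. Next, call $F$ \emph{typical} if every $G\in\F$ satisfies $F\cap G\in K(F)$. By the Erd\H{o}s--Rado sunflower lemma, for each $F$ the sets $F\cap G$ not in $K(F)$ come from families of bounded size, and a counting argument bounds the number of atypical $F$ by $c(s,L)\,n^{k-1}$. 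Since $\prod_i\frac{n-\ell_i}{s-\ell_i}$ has order $n^k$, the atypical sets are negligible once $n>n_0(s,L)$, provided we also show the typical part has a little slack.

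Second step (rank/counting). For a typical $F$, I want to prove that $K(F)$ contains no set $S\subset F$ with $|S|\le k$ that is "independent", meaning not contained in any proper member of $K(F)$. Using intersection-closure and the fact that sizes lie in $L$, a chain $\emptyset\subseteq T_1\subsetneq\dots\subsetneq T_r\subsetneq F$ in $K(F)\cup\{F\}$ has length $r\le k$. I would then double count pairs (typical $F$, maximal chain in $K(F)$). Fix a chain $T_1\subset\dots\subset T_i$. The possible next kernels $T_{i+1}$ (of size $\ell_j$, $j>i$), together with the sets of $\F$ above them, pairwise intersect exactly in $T_i$ on the part outside $T_i$, because they are typical and $K$ is intersection-closed. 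Hence there are at most $\frac{n-\ell_i}{s-\ell_i}$ branches, and multiplying gives the bound. Where $\ell_{i+1}$ is skipped in a chain, the corresponding factor is replaced by $1$ times a larger denominator, which only helps because $\frac{n-\ell}{s-\ell}$ is increasing in $\ell$ for $n>s$.

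The main obstacle I expect is this second step. I must make the branching estimate rigorous, i.e.\ show that distinct children of a kernel really have disjoint "petals" outside it. I also need a count that is tight enough that the $O(n^{k-1})$ atypical sets are absorbed, rather than only an $O(n^k)$ bound. My first attempt would be induction on $k$. Fix a minimal kernel $T_1$ of size $\ell_1$ and pass to the link family $\{F\setminus T_1: F\supseteq T_1\}$, which has $(s-\ell_1)$-sets and intersection sizes in $\{\ell_2-\ell_1,\dots,\ell_k-\ell_1\}$. Induction bounds the link by $\prod_{i\ge2}\frac{n-\ell_i}{s-\ell_i}$. Then I would show that the number of admissible $T_1$, weighted appropriately, contributes the factor $\frac{n-\ell_1}{s-\ell_1}$. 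If this weighting fails, I would fall back on a fractional (LP-duality) version of the same count.
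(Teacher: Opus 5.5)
\textbf{Gaps in the $\Delta$-system outline.} The paper does not prove this statement. It quotes it from Deza, Erd\H{o}s and Frankl and proves only the case $\ell_i=i$ (Theorem 1.7), by a case analysis: a common element, a $2$-cover, or a $3$-cover. Judged on its own, your outline fails at the steps that carry the weight.

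First, the closure claim in Step 1 is neither proved nor true. Your petal argument gives one pair $G_1,G_2$ with $G_1\cap G_2=T_1\cap T_2$, hence $|T_1\cap T_2|\in L$. It does not give an $(s+1)$-petal sunflower through $F$ with kernel $T_1\cap T_2$. For a counterexample, take $F=\{a,b,c,d\}$, many sets $\{a,b,x_i,y_i\}$ and many sets $\{a,c,u_j,v_j\}$, with all new points distinct, so $L=\{1,2\}$. Then $F$ is typical and $\{a,b\},\{a,c\}\in K(F)$, but $\{a\}\notin K(F)$, since no member meets $F$ exactly in $\{a\}$. Intersection-closure of this kind is only available after passing to a subfamily with a constant-factor loss (F\"uredi), which is useless for an exact bound.

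Second, the branching estimate in Step 2 is false. In a Steiner triple system ($s=3$, $L=\{0,1\}$) every member is typical and $K(F)=\{\emptyset,\{a\},\{b\},\{c\}\}$. The root $\emptyset$ has all $n$ singletons as children, not at most $n/3$. The number of pairs ($F$, maximal chain) is $n\cdot\frac{n-1}{2}=3|\F|$, three times your product. In general only the differences $T_{i+1}\setminus T_i$ of the children are disjoint, and members above different children overlap outside $T_i$. To rescue the double count you would need a lower bound on the number of maximal chains through every typical $F$, and that is exactly the structural lemma you do not have.

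Third, the bound is sharp (e.g.\ Steiner systems). So ``typical $\le$ product'' has no uniform slack to absorb $c\,n^{k-1}$ atypical members; you would need a trade-off between the two parts.

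\textbf{The fallback induction, and a route that works.} As stated, the fallback also fails. Two members containing $T_1$ may meet exactly in $T_1$, so the link of $T_1$ has intersection sizes in $\{0,\ell_2-\ell_1,\dots,\ell_k-\ell_1\}$. That is still $k$ values, and the factor $\frac{n-\ell_1}{s-\ell_1}$ does not come from counting choices of $T_1$. The induction goes through if you remove the value $0$ by averaging over points, in three cases.
\begin{itemize}
\item If $\ell_1=0$, then $s|\F|=\sum_x|\F(x)|$. The link of a point has intersection sizes in $\{\ell_2-1,\dots,\ell_k-1\}$, so induction gives $|\F|\le\frac ns\prod_{i\ge2}\frac{n-\ell_i}{s-\ell_i}$, which is the claim.
\item If $\ell_1\ge1$ and all members share an $\ell_1$-set $T$, the link of $T$ falls under the previous case with parameters $n-\ell_1$, $s-\ell_1$, $L-\ell_1$. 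This gives exactly the product.
\item If $\ell_1\ge1$ and there is no common $\ell_1$-set, fix $A\in\F$ and, for each $S\in\binom{A}{\ell_1}$, a member $B(S)\not\supseteq S$. Every member contains some $S\cup\{b\}$ with $b\in B(S)\setminus S$. There are at most $s\binom{s}{\ell_1}$ such $(\ell_1+1)$-sets, and each of their links has only $k-1$ intersection values. Ray-Chaudhuri--Wilson then gives $|\F|=O_s(n^{k-1})$, which is below the product for large $n$.
\end{itemize}
This is the mechanism of the paper's proof of Theorem 1.7, and it needs no $\Delta$-systems.
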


A recent improvement for the case $0<\ell_1$ was given by Heged\"us.

\begin{thm} {\rm \cite{H}} Let $0<k\leq s\leq n$ and $L=\{ \ell_1, \ell_2, \ldots , \ell_k\}$ where $0< \ell_1< \ell_2< \ldots < \ell_k$. Suppose that $\F \subset {[n]\choose s}$ is a family such that 
$|A\cap B|\in\{l_1,l_2,\dots,l_k\}$ holds for every two distinct members $A,B\in\mathcal{F}$. Then for $n>n_0(s,L)$	
$$|\F|\leq {n-\ell_1 \choose s}.$$	
\end{thm}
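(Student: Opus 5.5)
The plan is to derive the bound from Theorem~\ref{DEF} (Deza--Erd\H os--Frankl). As stated, the right-hand side ${n-\ell_1\choose s}$ has order $n^s$. The Deza--Erd\H os--Frankl bound has order $n^{k'}$, where $k'$ is the number of intersection sizes that can actually occur, and $k'\le s-1$ because $\ell_1>0$. So for large $n$ the statement should follow by comparing polynomial degrees.

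First I reduce $L$. Two distinct $s$-sets meet in at most $s-1$ elements, so every $\ell_i\ge s$ can be discarded. Let $L'=L\cap\{1,\dots,s-1\}$ and $k'=|L'|$. Since $\ell_1\ge 1$, we have $k'\le s-1$. If $k'=0$, then $\F$ has no two distinct members, so $|\F|\le 1\le {n-\ell_1\choose s}$ once $n-\ell_1\ge s$. If $k'\ge 1$, then $0<k'\le s\le n$ and $\F$ is $L'$-intersecting, so Theorem~\ref{DEF} gives, for $n>n_0(s,L')$,
\[
|\F|\le \prod_{\ell\in L'}\frac{n-\ell}{s-\ell}.
\]
Here $\ell_1=\min L'$ whenever $k'\ge1$, because $\ell_1\le s-1$ in that case.

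Next I bound this product. Each denominator $s-\ell$ is at least $1$, and each numerator satisfies $n-\ell\le n-\ell_1$. Hence the product is at most $(n-\ell_1)^{k'}\le (n-\ell_1)^{s-1}$, the last step using $n-\ell_1\ge 1$.

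Finally, set $m=n-\ell_1$. Then ${m\choose s}\ge (m-s)^s/s!$, and this is at least $m^{s-1}$ as soon as $(m-s)^s\ge s!\,m^{s-1}$. That holds for all $m$ beyond some threshold depending only on $s$, for instance $m\ge 2s\cdot s!+2s$. Choosing $n_0$ as the maximum of this threshold plus $\ell_1$ and the constant from Theorem~\ref{DEF} finishes the argument. I do not expect a genuine obstacle. The only points needing care are discarding the values $\ell_i\ge s$, so that Theorem~\ref{DEF} applies with $k'\le s-1$, and making sure $n_0$ depends only on $s$ and $L$.
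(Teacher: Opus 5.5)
The paper gives no proof of this statement; it only quotes it from Heged\"us. Your derivation therefore has to stand on its own, and it does: it is correct for the statement as printed.

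The steps are all sound: discarding the values $\ell_i\ge s$, applying Theorem~\ref{DEF} to $L'$ (where $k'\le s-1$ because $\ell_1\ge 1$), bounding the product by $(n-\ell_1)^{s-1}$, and comparing degrees with ${n-\ell_1\choose s}$. Your explicit threshold $m\ge 2s\cdot s!+2s$ works, and $n_0$ depends only on $s$ and $L$.

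Two comments on the route. First, Theorem~\ref{DEF} is much heavier than needed. Since $\ell_1\ge 1$, $\F$ is intersecting, so the Erd\H os--Ko--Rado theorem already gives $|\F|\le{n-1\choose s-1}$ for $n\ge 2s$. This lies below ${n-\ell_1\choose s}$ beyond a threshold polynomial in $s$ (for $\ell_1=1$ already at $n\ge 2s$), so you avoid the exponential delta-system threshold hidden inside Theorem~\ref{DEF}.

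Second, the fact that a bare degree count suffices suggests the printed bound is a misprint for ${n-\ell_1\choose k}$. That is the natural target: for $L=\{\ell_1,\dots,s-1\}$ one has $k=s-\ell_1$, and all $s$-sets through a fixed $\ell_1$-set form an admissible family with exactly ${n-\ell_1\choose k}$ members. Neither Erd\H os--Ko--Rado nor Ray-Chaudhuri--Wilson reaches that bound, and neither does your crude estimate $(n-\ell_1)^{k'}$. Your route does reach it after a sharper estimate of the same product:
\begin{itemize}
\item $\ell_i\ge\ell_1+i-1$ gives $\prod_i(n-\ell_i)\le k'!\,{n-\ell_1\choose k'}$;
\item the numbers $s-\ell_i$ are distinct positive integers, so $\prod_i(s-\ell_i)\ge k'!$.
\end{itemize}
Hence Theorem~\ref{DEF} yields $|\F|\le{n-\ell_1\choose k'}\le{n-\ell_1\choose k}$ for $n$ large.
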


Substituting $\ell_i=i\ (1\leq i\leq k)$ into Theorem \ref{DEF} we obtain
$$	|\mathcal F|\leq \frac{\binom{n-1}{k}}{\binom{s-1}{k}}.$$

The first  goal of our paper is to give an easier proof for this special case. Since we do not use the method of delta-systems our threshold for $n$ is polynomial instead of the general exponential one.
\begin{thm}
	\label{jedini}
	Let $\mathcal F\subset\binom{[n]}{s}$ be a family, $2\leq k,
	k+2\leq s<n$, such that for every two distinct sets $A,B\in\mathcal{F}$ the following holds: $1\leq|A\cap B|\leq k$. Then, if $n\geq s^{5\over 2}$, we have
	\begin{equation}
		\label{glavni}		|\mathcal F|\leq \frac{\binom{n-1}{k}}{\binom{s-1}{k}}.
	\end{equation}
\end{thm}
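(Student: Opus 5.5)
We plan a double counting of pairs $(F,K)$ with $F\in\mathcal F$ and $K$ a $(k+1)$-subset of $F$. Since any two distinct members meet in at most $k$ elements, each $(k+1)$-set lies in at most one member of $\mathcal F$. Hence the family $\partial_{k+1}\mathcal F$ of all $(k+1)$-subsets of members has size exactly $|\mathcal F|\binom{s}{k+1}$. The target bound is equivalent to $|\mathcal F|\binom{s}{k+1}\le \frac{s}{k+1}\binom{n-1}{k}=\frac{s\binom{n}{k+1}}{n}$, which is roughly the number of $(k+1)$-sets through a fixed point times $s/(k+1)$. So intersecting should cost us about a factor $s/n$ compared to the trivial packing bound $\binom{n}{k+1}/\binom{s}{k+1}$.

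The key steps, in order, are as follows.

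First, split by degrees. Let $d(x)$ be the number of members containing $x$. Fix $F_0\in\mathcal F$. Every member meets $F_0$, so $|\mathcal F|\le 1+\sum_{x\in F_0}(d(x)-1)$, and it suffices to bound the degrees. The family $\mathcal F_x=\{F\setminus\{x\}: x\in F\in\mathcal F\}$ is $(s-1)$-uniform on $n-1$ points, and its pairwise intersections are at most $k-1$. Packing $k$-subsets gives $d(x)\le \binom{n-1}{k}/\binom{s-1}{k}$, which is exactly the right-hand side. This alone loses a factor $s$, so a refinement is needed.

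Second, use $F_0$ more carefully. For each $F\in\mathcal F\setminus\{F_0\}$, the intersection $F\cap F_0$ is a nonempty set $T$ of size $t\le k$. Group the members by $T$. Members with the same $T$ have pairwise disjoint complements of $T$ inside $F$ only up to overlap $k-t$. For each fixed $T$, pack the $(k-t+1)$-subsets of $[n]\setminus F_0$ into the sets $F\setminus F_0$, which have size $s-t$. This gives at most $\binom{n-s}{k-t+1}/\binom{s-t}{k-t+1}$ members with intersection $T$. Summing over $T$ yields
\[
|\mathcal F|\le 1+\sum_{t=1}^{k}\binom{s}{t}\frac{\binom{n-s}{k-t+1}}{\binom{s-t}{k-t+1}}.
\]
The $t=1$ term is $s\binom{n-s}{k}/\binom{s-1}{k}$, which is again too large by a factor of about $s$. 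So counting intersections with $F_0$ alone is also insufficient.

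Third, which is the main step and the main obstacle: combine both ideas through the local structure at high-degree points. If some point $x$ has large degree, close to the bound, then $\mathcal F_x$ is a nearly perfect packing of $k$-sets of $[n]\setminus\{x\}$. Any member $G$ avoiding $x$ must meet every one of the $d(x)$ sets $F\ni x$. The sets $F\setminus\{x\}$ are nearly pairwise disjoint in the sense that they share at most $k-1$ points, so the $s$ points of $G$ can cover only about $s\binom{n-2}{k-1}/\binom{s-2}{k-1}$ of them. This is far fewer than $d(x)\approx\binom{n-1}{k}/\binom{s-1}{k}$ when $n\gg s^2$, so such a $G$ forces a contradiction. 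Hence either every member contains $x$, in which case $|\mathcal F|=d(x)$ and we are done by the first step, or every degree is at most roughly $s^{2}\binom{n-2}{k-1}/\binom{s-2}{k-1}$, which is of order $\frac{s^{3}}{n}$ times the bound. In the low-degree case we return to $|\mathcal F|\le\sum_{x\in F_0}d(x)\le s\max_x d(x)$, which gives a bound of order $\frac{s^{3}}{n}\cdot\frac{\binom{n-1}{k}}{\binom{s-1}{k}}$. I expect the difficulty to lie in balancing the threshold for "large degree" so that both cases close. To do this I would use the refined estimate from the second step, restricted to $t=1$ terms with $T=\{x\}$, in place of the crude $s\max d$. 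The hypothesis $k+2\le s$ is used so that $\binom{s-2}{k-1}$ versus $\binom{s-1}{k}$ gains a factor of about $s/k$. The optimization of the threshold is where I expect exactly the condition $n\ge s^{5/2}$ to emerge.
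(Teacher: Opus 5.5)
\textbf{There is a genuine gap.} Carried out rigorously, your argument proves the bound only for $n$ of order $s^3$, not for $n\ge s^{5/2}$.

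Write $R=\binom{n-1}{k}/\binom{s-1}{k}$. If no point lies in all members, then for each $x$ there is a member $G_x$ with $x\notin G_x$, and every $F\ni x$ meets $G_x$. Hence $d(x)\le\sum_{y\in G_x}d(x,y)\le s\binom{n-2}{k-1}/\binom{s-2}{k-1}=\frac{s(s-1)}{n-1}R$. Note this has one factor of $s$, not $s^2$ as you wrote. Summing over $x\in F_0$ gives $|\mathcal F|\le\frac{s^2(s-1)}{n-1}R$, which is at most $R$ only when $n-1\ge s^2(s-1)$. So the range $s^{5/2}\le n\le s^3-s^2$, which is nonempty for every $s\ge 4$, is not covered by your argument.

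What you defer to ``optimizing the threshold'' is exactly the missing part, and the tool you name cannot supply it:
\begin{itemize}
\item There is no free threshold to balance: the degree bound above holds for every non-universal point.
\item Each $t=1$ term $\binom{n-s}{k}/\binom{s-1}{k}$ of your second step is already of order $R$.
\item The $t=2$ part $\binom{s}{2}\binom{n-s}{k-1}/\binom{s-2}{k-1}$ alone is of order $\frac{s^3}{n}R$.
\end{itemize}
Also, $\binom{s-1}{k}/\binom{s-2}{k-1}=\frac{s-1}{k}$ holds identically, so that comparison is not where $k+2\le s$ is used.

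\textbf{The missing idea} is to push your covering argument one level deeper, which requires a second case distinction.

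\emph{Two-point cover.} Suppose some two points $u,v$ meet every member but neither is universal. Then $|\mathcal F|\le d(u)+d(v)\le\frac{2s(s-1)}{n-1}R\le R$ as soon as $n\ge 2s^2$. Moreover $2s^2\le s^{5/2}$ because $s\ge k+2\ge 4$; this is where that hypothesis enters.

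\emph{No two-point cover.} Then for every $a\in F_0$ and $b\in G_a$ there is a member $H_{a,b}$ avoiding both $a$ and $b$. So every member $F$ contains one of at most $s^3$ triples $\{a,b,c\}$: take $a\in F\cap F_0$, $b\in F\cap G_a$, $c\in F\cap H_{a,b}$. Two distinct members through a common triple share at most $k-3$ further points. Packing $(k-2)$-subsets then shows each triple lies in at most $\binom{n-3}{k-2}/\binom{s-3}{k-2}=\frac{(s-1)(s-2)}{(n-1)(n-2)}R$ members. Hence $|\mathcal F|\le\frac{s^3(s-1)(s-2)}{(n-1)(n-2)}R\le R$ whenever $n\ge s^{5/2}$ (using $n\ge s$).

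This three-case structure (a universal point, a two-point cover, or a triple cover via Lemma 2.1) is the paper's proof. Your first step coincides with its first case.
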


 It turned out that the method of the proof allows us to prove a generalization of the Erd\H os--Ko--Rado theorem for large $n$.
 
 \begin{thm}
 	\label{lema1}
 	Let $\mathcal F\subset \binom{[n]}{k}\cup\binom{[n]}{k+1}\cup\dots\cup\binom{[n]}{s}$, $3\leq k\leq s$, be a family such that for every two distinct sets $A,B\in\mathcal{F}$ it holds that $1\leq|A\cap B|\leq k-1$. Then, if $n>n_0(s)$, we have
 	\begin{equation}
 		\label{propi}
 		|\mathcal F|\leq\binom{n-1}{k-1}.
 	\end{equation}
 \end{thm}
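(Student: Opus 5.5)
The plan is to split according to whether some element lies in every member of $\mathcal F$. Two observations drive everything. First, since $|A\cap B|\le k-1$ for distinct $A,B\in\mathcal F$, the sets $A\setminus\{y\}$ and $B\setminus\{y\}$ for members through a common point $y$ meet in at most $k-2$ elements. Second, every member has at least $k$ elements, so each $F\setminus\{y\}$ contains at least one $(k-1)$-subset. Hence, for a fixed $y$, the families of $(k-1)$-subsets of the sets $F\setminus\{y\}$, $y\in F\in\mathcal F$, are pairwise disjoint.

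\emph{Case 1: some $y$ lies in every member of $\mathcal F$.} Give each $F\in\mathcal F$ one $(k-1)$-subset of $F\setminus\{y\}\subset [n]\setminus\{y\}$. By the disjointness above, distinct members receive distinct $(k-1)$-subsets. Therefore $|\mathcal F|\le\binom{n-1}{k-1}$, which is (\ref{propi}).

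\emph{Case 2: no element lies in every member.} Write $\mathcal F_y=\{F\in\mathcal F: y\in F\}$. I will first show that $|\mathcal F_y|\le s\binom{n-2}{k-2}$ for every $y$.
\begin{itemize}
\item Fix $y$ and choose $G_y\in\mathcal F$ with $y\notin G_y$; this exists by the case assumption.
\item Since $\mathcal F$ is intersecting, every $H\in\mathcal F_y$ meets $G_y$ in some $z\ne y$, so $z\in H\setminus\{y\}$.
\item Because $|H\setminus\{y\}|\ge k-1$, I can choose a $(k-1)$-subset $T_H\subset H\setminus\{y\}$ with $z\in T_H$.
\item The sets $T_H$ are distinct for distinct $H$, by the disjointness observation.
\item Each $T_H$ is a $(k-1)$-subset of $[n]\setminus\{y\}$ meeting $G_y$, and there are at most $|G_y|\binom{n-2}{k-2}\le s\binom{n-2}{k-2}$ such sets.
\end{itemize}
This gives the claimed bound on $|\mathcal F_y|$.

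Now fix any $G\in\mathcal F$. Every member of $\mathcal F$ meets $G$, so $\mathcal F=\bigcup_{y\in G}\mathcal F_y$. It follows that
\[
|\mathcal F|\le \sum_{y\in G}|\mathcal F_y|\le s^2\binom{n-2}{k-2}=s^2\,\frac{k-1}{n-1}\binom{n-1}{k-1}.
\]
This is less than $\binom{n-1}{k-1}$ as soon as $n-1>s^2(k-1)$, and $n_0(s)=s^3$ certainly suffices. So in Case 2 the inequality (\ref{propi}) holds even strictly.

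The step I expected to be the main obstacle is bounding $\mathcal F_y$ when $y$ is not a common element. A naive count of $(k-1)$-subsets of $F\setminus\{y\}$ only gives $|\mathcal F_y|\le\binom{n-1}{k-1}$. Summing that over the up to $s$ points of $G$ loses a factor of $s$. The fix is to use the set $G_y$ avoiding $y$: it forces each $H\setminus\{y\}$ to contain a $(k-1)$-subset hitting a fixed set of size at most $s$. That drops the count by a factor of order $n$ and leaves plenty of room for the union bound.
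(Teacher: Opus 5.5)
Your proof is correct, and it is organized more economically than the paper's. Your Case 1 coincides with the paper's first case. When no point is common to all members, the paper splits into two subcases:
\begin{itemize}
\item if two points $1,2$ cover $\mathcal F$, it covers $\mathcal F$ by the $2s$ families $\mathcal F(1,b)$, $\mathcal F(2,a)$, each of size at most $\binom{n-2}{k-2}$;
\item if no two points cover $\mathcal F$, it invokes Lemma 2.1 to cover $\mathcal F$ by at most $s^3$ triples, obtaining $|\mathcal F|\le s^3\binom{n-3}{k-3}$.
\end{itemize}
Your pairs $\{y,z\}$ with $y\in G$, $z\in G_y$ are exactly the first two stages of the construction in the proof of Lemma 2.1. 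You observe that this stage needs only the absence of a common point, not the absence of a $2$-cover. So a single bound, $s^2\binom{n-2}{k-2}$, handles both of the paper's remaining cases at once, and the lemma is never needed.

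The price is the threshold. The paper's triple bound requires only $s^3(k-1)(k-2)\le (n-1)(n-2)$, i.e.\ $n$ of order $s^{3/2}k\le s^{5/2}$. You need $n-1>s^2(k-1)$, i.e.\ $n$ of order $s^3$. Since the theorem only asserts some $n_0(s)$, this costs nothing here. The triple machinery is really inherited from Theorem 1.7, where the $s^{5/2}$ threshold is the point.

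In exchange, your argument is shorter and gives an explicit $n_0(s)=s^3$, whereas the paper leaves $n_0$ implicit. It also never uses $k\ge 3$; in the paper, that hypothesis enters through the count $\binom{n-3}{k-3}$.
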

  
 Section 2 contains the  proofs of our theorems.  In Section 3 we consider some related constructions.
 At the end of the article, we present some open problems which may serve as a basis for further investigations.

\section{Proofs}

{\it Proof of Theorem 1.7}.  First, if there is an element, for example $1$, such that $1\in F$ for all $F\in\mathcal F$, then for all those $k$-element subsets that do not contain $1$ it holds that they are included in at most one element of $\mathcal{F}$. Since each $F\in\mathcal F$ contains $\binom{s-1}{k}$ such $k$-element sets, we have $|\mathcal F|\binom{s-1}{k}\leq\binom{n-1}{k}$, thus (\ref{glavni}) follows even without any restriction on $n$.
	
	Next, we consider the case when there is no element which is contained in all the sets, but there are two elements, for example $1$ and $2$, such that for all $F \in \mathcal{F}$, either $1 \in F$ or $2 \in F$ (or both). Since neither $1$ nor $2$ is contained in all sets, it follows that there exist sets $A$ and $B$ such that $1\in A$ but $2\not\in A$ and $2\in B$ but $1\not\in B$. For $b\in B$ (respectively $a\in A$) let $\mathcal F(1,b)$ (respectively $\mathcal F(2,a)$) denote those sets $F$ in $\mathcal F$ for which it holds that $\{1,b\}\subset F$ (respectively $\{2,a\}\subset F$). Now for all those $(k-1)$-element subsets that do not contain $1$ and $b$ it holds that they are included in at most one member of $\mathcal{F}(1,b)$. Since each $F\in\mathcal F(1,b)$ contains $\binom{s-2}{k-1}$ such $(k-1)$-element sets, we have $|\mathcal F(1,b)|\binom{s-2}{k-1}\leq\binom{n-2}{k-1}$, thus $|\mathcal F(1,b)|\leq\frac{\binom{n-2}{k-1}}{\binom{s-2}{k-1}}$ and the same holds for $|\mathcal F(2,a)|$.
	
	Let $F\in \mathcal{F}$ be a member different from $A$ and $B$. If $1\in F$ then $F\in \mathcal F(1,b)$ must hold for some element $b\in B$ since $\mathcal F$ is intersecting. Similarly, $2\in F$ implies $F\in \mathcal F(2,a)$ for an $a\in A$. But $A$ and $B$ are also intersecting, say in an element $c$. Then $A\in  \mathcal F(1,c)$ and $B\in \mathcal F(2,c)$ must also hold.
	
	All together we have
	$$\mathcal F=\bigcup_{b\in B}\mathcal F(1,b)\cup \bigcup_{a\in A}
	\mathcal F(2,a)$$
	and
	\[
	|\mathcal F|\leq\sum_{b\in B}|\mathcal F(1,b)|+\sum_{a\in A}|\mathcal F(2,a)|\leq2s\frac{\binom{n-2}{k-1}}{\binom{s-2}{k-1}}.
	\]
	Thus, we have to prove
	\[
	2s\frac{\binom{n-2}{k-1}}{\binom{s-2}{k-1}}\leq\frac{\binom{n-1}{k}}{\binom{s-1}{k}},
	\]
	which is equivalent to
	\begin{equation}
		\label{krajdrugog}
		2s\leq\frac{n-1}{s-1}.
	\end{equation}
	Since $2s\leq\frac{n}{s}$ is a stronger condition than (3), it follows that, under the assumption $n\geq 2s^2$, the desired inequality (1) holds. Since $4\leq s$ the condition $s^{5\over 2}\leq n$ implies $2s^2\leq n$, finishing the proof in this case.
	
	Finally, if there are no two elements such that at least one of them is contained in every member of $\mathcal{F}$, then we will use the following lemma.
	
	\begin{lem}
		\label{lr}
		Suppose that $3\leq s$ is an integer and let $\mathcal F\subset \binom{[n]}{s}$ be an intersecting family,
		such that for any pair $u,v \in [n]$ there is a member $F\in \mathcal F$ with $F\cap \{u,v\}=\emptyset$. Then there is a family $\mathcal T\subset \binom{[n]}{3}$ such that every member of $\mathcal F$ contains at least one member of $\mathcal T$ and $|\mathcal T|\leq s^3$.
	\end{lem}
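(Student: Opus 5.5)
We plan to build $\mathcal T$ by a branching argument in three levels, in the spirit of the classical bound on covers of intersecting families: each member of $\mathcal F$ will contain one of at most $s^3$ triples obtained by successively choosing elements from fixed sets of $\mathcal F$.

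First fix any $F_1\in\mathcal F$. For each $u\in F_1$, the hypothesis applied to the pair $\{u,u\}$ (that is, to a single element; or to $u$ together with any other element) gives a set $F_2(u)\in\mathcal F$ with $u\notin F_2(u)$. For each $u\in F_1$ and $v\in F_2(u)$ we have $v\neq u$, so the hypothesis gives $F_3(u,v)\in\mathcal F$ with $F_3(u,v)\cap\{u,v\}=\emptyset$. Define
$$\mathcal T=\bigl\{\{u,v,w\}: u\in F_1,\ v\in F_2(u),\ w\in F_3(u,v)\bigr\}.$$
Since $u\notin F_2(u)$ and $u,v\notin F_3(u,v)$, the elements $u,v,w$ are distinct, so $\mathcal T\subset\binom{[n]}{3}$. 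Moreover $|\mathcal T|\le s\cdot s\cdot s=s^3$, because each of $F_1,F_2(u),F_3(u,v)$ has exactly $s$ elements.

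Next we check the covering property. Let $G\in\mathcal F$ be arbitrary. Since $\mathcal F$ is intersecting, $G\cap F_1\neq\emptyset$; pick $u\in G\cap F_1$. Again $G\cap F_2(u)\neq\emptyset$; pick $v$ there. Finally $G\cap F_3(u,v)\ne\emptyset$; pick $w$ there. Then $\{u,v,w\}\in\mathcal T$ and $\{u,v,w\}\subset G$. This holds even when $G$ coincides with one of the chosen sets $F_1,F_2(u),F_3(u,v)$, since a set meets itself.

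The only delicate point is making sure the chosen triples really have three distinct elements, which is exactly where the hypothesis is used: $F_2(u)$ must avoid $u$, and $F_3(u,v)$ must avoid both $u$ and $v$. The pair hypothesis supplies both; for $F_2(u)$ we apply it to $\{u,x\}$ for any $x\neq u$, which exists since $n>s\ge3$. Also note $\mathcal F\neq\emptyset$ is implied by the hypothesis, so $F_1$ exists.
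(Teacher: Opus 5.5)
Your proof is correct and is essentially the paper's own argument. Like the paper, you fix a starting member $F_1$, choose $F_2(u)$ avoiding $u$ and $F_3(u,v)$ avoiding $u,v$, and use the intersecting property three times to trace any $G\in\mathcal F$ down to a triple $\{u,v,w\}\subset G$; your explicit checks that the three elements are distinct and that an auxiliary $x\neq u$ exists simply spell out what the paper leaves implicit.
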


\begin{proof} Let $A\in \mathcal F$ be a fixed ``starting member" of $\mathcal F$ and choose an arbitrary element $a\in A$.  As $a$ is not contained in all sets in $\mathcal F$, there is a set $B(a)\in\mathcal F$ such that $a\not\in B(a)$. Let $b\in B(a)$ any element of $B(a)$, $b\neq a$. Since, by the assumptions, there are no $2$ points such that at least one of them is contained in each member of $\mathcal{F}$, there is a set $C(a,b)$ such that $a,b\not\in C(a,b)$. Let $c\in C(a,b)$, $c\neq a,b$. There are at most $s^3$ such $3$-element sets $\{a,b,c\}$.
$\mathcal T$ will be the family of such triples $\{a,b,c\}$.

We only have to prove that there exists a $T\in \mathcal T$ for every
$F\in \mathcal F$ satisfying $T\subset F$. Let us consider a member
$F\in \mathcal F$. Since $ \mathcal F$ is intersecting, $A\cap F$ is non-empty, therefore contains an element $a$.  Neither $B(a)\cap F$ is empty, it contains an element $b$. Finally $C(a,b)\cap F$ also has an element $c$, proving that $T=\{ a,b,c\}\subset F$ holds, as desired.\end{proof}

In order to finish the proof  we suppose that there are no $2$ elements such that at least one of them is contained in each member of $\mathcal{F}$. Introduce the notation
$\mathcal F(U)=\{ F\in \mathcal F: U\subset F\}$ for a set $U\subset [n]$.
 By the lemma we have
 $$\mathcal F\subset \bigcup_{T\in \mathcal T}\mathcal F(T).$$

 Now introduce the notation $\widehat{\mathcal F}(T)=\{F-T:\ F\in \mathcal F(T)\}$. This is a family of $s-3$-element subsets of $[n]-T$ satisfying the property that their $k-2$-element subsets are all different.
  Since each $F-T\in \widehat{\mathcal F}(T)$ contains $\binom{s-3}{k-2}$ such $(k-2)$-element sets, we have $|\widehat{\mathcal F}(T)|\binom{s-3}{k-2}\leq\binom{n-3}{k-2}$, thus $|\mathcal F(T)|=|\widehat{\mathcal F}(T)|\leq\frac{\binom{n-3}{k-2}}{\binom{s-3}{k-2}}$. All together we have
\[
|\mathcal F|\leq |\mathcal T|\frac{\binom{n-3}{k-2}}{\binom{s-3}{k-2}}\leq s^3\frac{\binom{n-3}{k-2}}{\binom{s-3}{k-2}}.\eqno(4)
\]
by the lemma.
Thus, we have to prove
\[
s^3\frac{\binom{n-3}{k-2}}{\binom{s-3}{k-2}}\leq\frac{\binom{n-1}{k}}{\binom{s-1}{k}},
\]
which is equivalent to

$$	s^3\leq\frac{(n-1)(n-2)}{(s-1)(s-2)}.\eqno(5)$$

Since $s^3\leq\frac{n^2}{s^2}$ is a stronger condition than  (5), it follows that, under the assumption $n\geq s^{\frac{5}{2}}$, the desired inequality (1) holds.\qed

\vskip 3mm

{\it Proof of Theorem 1.8}.
 	If $k=s$ then the theorem gives back the Erd\H os--Ko--Rado theorem, therefore we can suppose throughout the proof that $k<s$. First, if there is an element, for example $1$, such that $1\in F$ for all $F\in\mathcal F$, then for all those $(k-1)$-element subsets that do not contain $1$ it holds that they are included in at most one member of $\mathcal{F}$. Therefore $|\mathcal F|$ cannot exceed the total number of such $k-1$-element sets, that is
 	\[
 	|\mathcal F|\leq\binom{n-1}{k-1},
 	\]
 	thus (\ref{propi}) follows even without any restriction on $n$.
 	
 	Next, if there is no element which is contained in all the sets, but there are two elements, for example $1$ and $2$, such that for all $F \in \mathcal{F}$, either $1 \in F$ or $2 \in F$ (or both), then there exist sets $A$ and $B$ such that $1\in A$ but $2\not\in A$ and $2\in B$ but $1\not\in B$. For $b\in B$ let $\mathcal F(1,b)$ denote those sets $F$ in $\mathcal F$  for which it holds that $\{1,b\}\in F$, and for $a\in A$ let $\mathcal F(2,a)$  be defined in the same way. Similarly to the previous case, a $k-2$-element subset of $[n]-\{ 1, b\}$ cannot be included in two members of $\mathcal F(1,b)$ that is the total number of members of $\mathcal F(1,b)$ cannot exceed the number of such $k-2$-element sets:
 	\[
 	|\mathcal F(1,b)|\leq\binom{n-2}{k-2},
 	\]
 	and the same holds for $|\mathcal F(2,a)|$.
 	Now,
 	\[
 	|\mathcal F|\leq\sum_{b\in B}|\mathcal F(1,b)|+\sum_{a\in A}|\mathcal F(2,a)|\leq2s\binom{n-2}{k-2}.
 	\]
 	Thus, we have to prove
 	\[
 	2s\binom{n-2}{k-2}\leq\binom{n-1}{k-1}.
 	\]
 	This is equivalent to
 	\[
 	2s\leq\frac{n-1}{k-1},
 	\]
 	which is true if $n$ is sufficiently large compared to $s$.
 	
 	Finally, if there are no two elements such that at least one of them is contained in each member of $\mathcal F$, we can follow the corresponding part of the proof of Theorem \ref{jedini}. First observe that Lemma 2.1 remains valid if $\F \subset {[n]\choose s}$ is replaced by $\F \subset {[n]\choose k}\cup \ldots \cup{[n]\choose s}$ where $3\leq k$, since we used in the proof  only the condition that the sizes of the members of $\F$ are between 3 and $s$.
 	
 	Then we can continue in a similar manner as in the proof of Theorem \ref{jedini}. Instead of the equation (4), the corresponding argument now leads to $|\mathcal F|\leq s^3\binom{n-3}{k-3}$. However we need to show that it is not more than ${n-1\choose k-1}$:
 	$$s^3\binom{n-3}{k-3}\leq {n-1\choose k-1}.$$
 	But this obviously holds for sufficiently large $n$.
 \qed

 \begin{note} Theorem \ref{lema1} implies Snevily's theorem for $3\leq k$ if the members $F$ of the family satisfy $|F|\leq s$ and $n$ is sufficiently large relative to $s$.
 	\end{note}

 \begin{proof} Let $\mathcal F=\mathcal F_1\cup \mathcal F_2\cup\dots\cup\mathcal F_{s}$, where $\mathcal F_i$ denotes the collection of the $i$-element sets in $\mathcal F$. Using the Erd\H os--Ko--Rado theorem separately for the families $\mathcal F_i$, $i=1,2,\dots,k-1$, we have $|\mathcal F_i|\leq\binom{n-1}{i-1}$ (we can use it since in these systems the sizes of the intersections are certainly not greater than $k-1$, so we only have to be sure that they are nonempty). For the rest of the system, if $n$ is large enough, we use Theorem \ref{lema1}, and thus together we have the desired result.
 \end{proof}

\section{Related constructions}

 Our proof for Theorem 1.7 does not work when $k=1$, since it contains $k-2$-element sets. But of course it is true for sufficiently large $n$. 
The lines of a finite projective plane geometry give a counter-example for small $n$. Let $q$ be a prime power. It is well-known that there are $q^2+q+1$ lines (subsets)
in the $q^2+q+1$-element set, each of size $q+1$ and their pairwise intersections have exactly one element. Taking $n=q^2+q+1,  s=q+1$ and $k=1$ we have $|\F|=n$ which is definitely more than the bound in the theorem. Therefore $n$ must be at least $s^2$.

Let us now show constructions attaining the upper bound in Theorem 1.7. Let ${\cal S}$ be an $(n-1, s-1, k)$-Steiner system, that is ${\cal S}\subset {[n-1]\choose s-1}$ where every $k$-element subset of $[n-1]$ is a subset of exactly one member of ${\cal S}$. If we have such a system, add a new element, say $n$ to all members. The so obtained family of $s$-element subsets of $[n]$ give equality in Theorem 1.7.

Now some counter-examples will be shown to the statement of Theorem 1.7 when $n$ is relatively small. For instance, consider the complements of the lines in the Fano plane. In this case $n=7$, $s=4$ and $k=3$, so Theorem \ref{jedini} would imply $|\mathcal F|\leq 5$, whereas we know that $|\mathcal F|=7$. This counterexample belongs to a (somewhat) larger class of counterexamples determined by certain designs. (We apologize the reader that in the rest of this section the terminology of the design theory will be used rather than the usual one in extremal set theory.) Let $X$ be a finite set of $n$ points and $\beta$ be a finite family of distinct $s$-subsets of $X$, called \emph{blocks.} Then the pair $D=(X,\beta)$ is called a \emph{$k$-$(n,s,\lambda)$ design} if every $k$-subset of $X$ occurs in exactly $\lambda$ blocks.  In a $k$-$(n,s,\lambda)$ design the number of the blocks $b$ is $b=\frac{\lambda\binom{n}{k}}{\binom{s}{k}}$. Thus, for the number of the blocks in a $2$-$(n,s,\lambda)$ design we have $b=\frac{\lambda n(n-1)}{s(s-1)}$. A $2$-$(n,s,\lambda)$ design is called \emph{symmetric} if $b=n$ (this is one of a few equivalent conditions). From these relations we have that in case of symmetric $2$-$(n,s,\lambda)$ designs it holds that
$$n=\frac{s(s-1)}{\lambda}+1.\eqno(6)$$

It is easy to see that in a symmetric $2$-$(n,s,2)$ design every two distinct blocks have exactly 2 points in common. Let us consider a family $\mathcal F$ that is determined by a symmetric $2$-$(n,s,2)$ design (these designs are called \emph{biplanes}). The family $\mathcal F$ is defined on $n$ elements, thus, from (6) we have $n=\frac{s(s-1)}{2}+1$, all sets in this family have $s$ elements, while the size of the intersection of every pair of its sets is $2$. It can be calculated that for these parameters Theorem \ref{jedini} would give
\[
|\mathcal F|\leq\frac{(n-1)(n-2)}{(s-1)(s-2)}=\frac{\frac{s(s-1)}{2}(\frac{s(s-1)}{2}-1)}{(s-1)(s-2)}\leq \frac{s(s+1)}{4},
\]
whereas, by the listed properties of symmetric designs, we have
\[
|\mathcal F|=\frac{s(s-1)}{2}+1.
\]

Thus, every symmetric $2$-$(n,s,2)$ design is indeed a counterexample to our theorem for small $n$. Such block designs exist for the following values of $s$: $4,5,6,9,11,13$. 

The family defined by certain \emph{quasi-residual} designs also serve as a counter-example where not all the intersections have the same size. For a symmetric $D=(X,\beta)$ design its \emph{residual} is a design $D'=(X,\beta')$  in which $\beta'=\{B- B':B\in\beta, B'\neq B {\rm \ for\ some\ } B'\in\beta\}$.
A {\it residual design} of a symmetric $2$-$(n,s,2)$ design has the following properties. The number of the points is $N=n-s$, the sizes of its blocks are $S=s-2$, while the number of its blocks is $B=\frac{s(s-1)}{2}$. A design is called {\it quasi-residual} if its parameters correspond to the parameters of a residual design of a symmetric design. The following theorem will be used.
\begin{thm} {\rm \cite{lawless}}
	In a quasi-residual design with $\lambda=2$ we have $N=\frac{S(S+1)}{2}$ points, the size of the blocks is $S$, the number of the blocks is $B=\frac{(S+2)(S+1)}{2}$, while the size of the intersection of every pair of its sets is $1$ or $2$.
\end{thm}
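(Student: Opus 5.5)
We first read off the parameters from the definition and then apply the standard variance (Fisher-type) counting argument with respect to a fixed block.

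\emph{Parameters.} A quasi-residual design with $\lambda=2$ has, by definition, the parameters of the residual of a symmetric $2$-$(n,s,2)$ design. Write $S=s-2$. Then $n=\frac{s(s-1)}{2}+1$ by (6), and:
\begin{itemize}
\item the number of points is $N=n-s=\frac{(s-1)(s-2)}{2}=\frac{S(S+1)}{2}$;
\item the number of blocks is $B=n-1=\frac{s(s-1)}{2}=\frac{(S+2)(S+1)}{2}$;
\item the replication number is $r=s=S+2$, since residuation keeps the replication number of each remaining point;
\item $\lambda=2$.
\end{itemize}
As a check, $r(S-1)=\lambda(N-1)$ holds: $(S+2)(S-1)=S^2+S-2=2\bigl(\frac{S(S+1)}{2}-1\bigr)$. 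So only the intersection statement needs proof.

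\emph{Counting around a fixed block.} Fix a block $B_0$. For $0\le i\le S$, let $x_i$ be the number of other blocks meeting $B_0$ in exactly $i$ points. I will use three standard counts.
\begin{enumerate}
\item Counting blocks other than $B_0$: $\sum_i x_i=B-1$.
\item Counting pairs $(p,C)$ with $p\in B_0\cap C$ and $C\ne B_0$: each point of $B_0$ lies in $r-1$ other blocks, so $\sum_i i\,x_i=S(r-1)=S(S+1)$.
\item Counting triples $(p,q,C)$ with $p\ne q$ both in $B_0\cap C$ and $C\ne B_0$: each ordered pair of points of $B_0$ lies in $\lambda-1=1$ other block, so $\sum_i i(i-1)x_i=S(S-1)$.
\end{enumerate}

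\emph{The key combination.} Now form
\[
\sum_i (i-1)(i-2)x_i=\sum_i i(i-1)x_i-2\sum_i i\,x_i+2\sum_i x_i .
\]
Substituting the three counts gives
\[
S(S-1)-2S(S+1)+(S+2)(S+1)-2 .
\]
Expanding, this is $S^2-S-2S^2-2S+S^2+3S+2-2=0$.

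Since $(i-1)(i-2)\ge 0$ for every integer $i$, and it is zero only for $i\in\{1,2\}$, every $x_i$ with $i\notin\{1,2\}$ vanishes. Hence any two distinct blocks meet in $1$ or $2$ points. This also excludes repeated blocks when $S\ge 3$.

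\emph{Main obstacle.} The only step needing care is the bookkeeping in the three counts. In particular, one must use that every point has replication number exactly $r=S+2$ in a $2$-design. This follows from $r(S-1)=\lambda(N-1)$, which we verified above, and from $Br=NS$. The final cancellation is then a routine computation.
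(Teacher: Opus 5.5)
Your proof is correct. The parameter computation matches the paper's description of the residual of a symmetric $2$-$(n,s,2)$ design, and the three counts around a fixed block $B_0$ are right. The identity $\sum_i (i-1)(i-2)x_i=0$, together with $(i-1)(i-2)>0$ for $i\notin\{1,2\}$, then forces every two blocks to meet in $1$ or $2$ points.

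The paper gives no proof of this statement. It quotes the result from the literature and uses it only to build counterexamples to Theorem~\ref{jedini} for small $n$. Your argument is the standard fixed-block (``variance'') count, familiar from the Hall--Connor theorem that quasi-residual designs with $\lambda=2$ are residual, so it makes that section self-contained at the cost of a few lines.

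It also supplies exactly the facts the paper needs from the citation:
\begin{itemize}
\item $x_0=0$, so the blocks form an intersecting family;
\item $x_i=0$ for $i\ge 3$, so all intersections have size at most $k=2$.
\end{itemize}
Keep your remark that repeated blocks are excluded once $S\ge 3$. Theorem~\ref{jedini} concerns families of distinct sets, and with $k=2$ its hypothesis $k+2\le s$ means the comparison is only made for $S\ge 4$.

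The one tacit hypothesis in your argument is $S\ge 2$, which is needed for the replication number to be determined by $r(S-1)=\lambda(N-1)$. It holds in every case of interest.
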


Let us consider a family $\mathcal F$ which is determined by a quasi-residual $2$-$(N,S,2)$ design. It can be calculated that for these parameters Theorem \ref{jedini} would give
\[
|\mathcal F|\leq\frac{(n-1)(n-2)}{(s-1)(s-2)}=\frac{(\frac{S(S+1)}{2}-1)(\frac{S(S+1)}{2}-2)}{(S-1)(S-2)}= \frac{(S+2)(S^2+S-4)}{4(S-2)},
\]
whereas, by properties of these designs, we have
\[
|\mathcal F|=\frac{(S+2)(S+1)}{2}.
\]

In the counterexamples discussed so far, the families were intersecting but the intersection of any two sets always contained at most two elements. However, counterexamples with three-element intersections also exist. Such cases can be found among \emph{quasi-symmetric} designs. A design is quasi-symmetric if there exist two integers $\mu_1$ and $\mu_2$ such that any two blocks intersect in either $\mu_1$ or $\mu_2$ points. It is known (see \cite{handbook}) that there exist quasi-symmetric designs with the following parameters (for all of them $(\mu_1,\mu_2)=(1,3)$): $2$-$(21,7,12)$, $2$-$(22,7,16)$, $2$-$(23,7,21)$, $3$-$(22,7,4)$ and $4$-$(22,7,4)$. In each case, the reader can compute the corresponding values and verify that the cardinalities of the families they define exceeds the limit allowed by Theorem \ref{jedini}.

Finally let us try to investigate, how large $n$ must be in Theorem \ref{lema1}. Since even the Erd\H os--Ko--Rado theorem contains the condition $2k\leq n$ we cannot expect a better bound here. Indeed, all the $k$-element subsets of $[2k-1]$ satisfy the condition of Theorem \ref{lema1} and this is more than ${n-1\choose k-1}$.  The following construction gives slightly larger $n$ for which the statement of the theorem does not hold.

Let $d\geq 2$ be an integer such that $d-1|k-2$  and choose $n=2k-2+{k-2\over d-1}$. It is easy to see that $n-k=k-2+{k-2\over d-1}$ is divisible by $d$. Hence the set $\{ k+1, \ldots , n\} =\{ k+1, \ldots , 2k-2+{k-2\over d-1}\}$ can be partitioned into $d$ equally sized subsets: $A_1, \ldots , A_d$. Let $B_i=\{ 2, 3, \ldots , k\} \cup A_i$. Our constructed  family consists of all $k$-element subsets containing the element 1 and $B_1, B_2 , \ldots , B_d$. It has ${n-1\choose k-1}$ members of size $k$ and $d$ members of size $s=k-1+{k-2\over d-1}$. It is easy to see that this family is intersecting but the sizes of the intersections cannot exceed $k-1$. However the number of members is
$${n-1\choose k-1}+d.$$

$n$ will be the largest in this construction if $d$ is chosen to be 2. Then $n=3k-4$, more than the trivial bound $2k$. But its ratio is worst if it is compared to $s$. 

 \section{Open problems}
 
 {\bf Open problem.} We believe that the condition $n\geq s^{5\over 2}$ in Theorem 1.7.
 	can be replaced by $n\geq s^{2+{1\over k-1}}$. What is the real threshold?
\vskip 3mm
 
 \begin{conj} Suppose that $0 \leq \ell \leq k\leq s$
 	are integers, and let $\mathcal F\subset \binom{[n]}{\ell }\cup \ldots \cup \binom{[n]}{k}\cup\dots\cup\binom{[n]}{s}$, is a family such that for every two distinct sets $A,B\in\mathcal{F}$ it holds that $1\leq|A\cap B|\leq k-1$. Then, if $n>n_0(s)$, we have
 	\begin{equation*}
 		\label{propi}
 		|\mathcal F|\leq \sum_{i=\ell}^k\binom{n-1}{i-1}.
 	\end{equation*}
 \end{conj}
 
 Bal\'azs Patk\'os \cite {P} and independently Chunyang Dou and Xing Peng \cite{DP} noticed that this conjecture can easily be proved by partitioning the family $\F$ according to the sizes of the members. Erd\H os-Ko-Rado theorem can be applied for the sets of size $i\ (\ell\leq i<k)$ and Theorem 1.8 can be applied for the rest of the family.
 
\section*{Acknowledgements}
We are indebted to professor G\'abor Heged\"us for his very helpful remarks.

The first author was supported by the Ministry of Science, Technological Development and Innovation of the Republic of Serbia (grants no. 451-03-33/2026-03/200125 and 451-03-34/2026-03/200125) and by the Hungarian Academy of Sciences (the \emph{Domus} program).

The work of the second author was not
	supported by the Hungarian National Research, Development and Innovation Office.
	
\section*{Declarations}

{\bf Competing interests:} The authors have no relevant financial or non-financial interests to disclose.\\

\noindent{\bf Data availability:} Not applicable.

\end{document}